\documentclass[envcountsame]{llncs}
%\hoffset=-0.05\textwidth \textwidth=1.1\textwidth
\bibliographystyle{amsplain}
%\bibliographystyle{mysplncs}

% macros.tex
\usepackage{amsmath}
\usepackage{amsfonts}
\usepackage{amssymb}
\usepackage{epsfig}

\DeclareMathOperator{\Frob}{Frob}

\DeclareMathOperator{\rank}{rank}

\newcommand{\inv}{^{-1}}

% ---- SHA ----
\DeclareFontEncoding{OT2}{}{} % to enable usage of cyrillic fonts

%\font\cyr=wncyr10 scaled \magstep 1
%\font\cyr=wncyr10

%\newcommand{\Sha}{{\cyr X}}

%\newcommand{\im}{\text{im}}

\newcommand{\ra}{\rightarrow}

\newcommand{\cO}{\mathcal{O}}

 \DeclareMathOperator{\Div}{Div}

%\renewcommand{\star}{\times}

%\newcommand{\im}{\mathfrak{m}}

%greek letters

\newcommand{\comment}[1]{}
\newcommand{\Q}{\mathbb{Q}}

\newcommand{\C}{\mathbb{C}}
\newcommand{\Cp}{{\mathbb C}_p}

\renewcommand{\P}{\mathbb{P}}

\newcommand{\Z}{\mathbb{Z}}
\newcommand{\F}{\mathbb{F}}

\newcommand{\Qp}{\Q_p}

   % noncanonical isomorphism

\newcommand{\softO}{\widetilde{O}}

%%%% Theoremstyles
\spnewtheorem{algorithm}[theorem]{Algorithm}{\bfseries}{\itshape}

% bulleted list environment

%end newenvironment

% bulleted list environment

%end newenvironment

% numbered list environment
\newcounter{listnum}

%end newenvironment

\title{Explicit Coleman Integration for Hyperelliptic Curves}
\author{Jennifer S. Balakrishnan\inst{1} \and Robert W. Bradshaw\inst{2} \and Kiran S. Kedlaya\inst{1}}
\institute{Massachusetts Institute of Technology, Cambridge, MA 02139, USA; \email{jen@math.mit.edu},
\email{kedlaya@mit.edu}
\and
University of Washington, Seattle, WA 98195, USA; \email{robertwb@math.washington.edu} }

%\email{jen@math.mit.edu, robertwb@math.washington.edu,
%  kedlaya@mit.edu}
\begin{document}

\maketitle{}

\begin{abstract}
Coleman's theory of $p$-adic integration figures prominently
in several number-theoretic applications, such as finding torsion and rational points on curves,
and computing $p$-adic regulators in $K$-theory (including $p$-adic heights on elliptic curves).
We describe an algorithm for computing Coleman integrals on hyperelliptic curves,
and its implementation in Sage.
\end{abstract}

\section{Introduction}

One of the fundamental difficulties of $p$-adic analysis is that the totally disconnected
topology of $p$-adic spaces makes it hard to introduce a meaningful form of antidifferentiation.
It was originally discovered by Coleman that this problem can be circumvented using the
principle of \emph{Frobenius equivariance}. Using this idea, Coleman introduced a $p$-adic
integration theory first on the projective line \cite{coleman:dilogarithms},
then (partly jointly with de Shalit) on curves and abelian varieties
\cite{coleman:torsion}, \cite{coleman-deshalit}.
Alternative treatments have been given by Besser \cite{besser:coleman} using
methods of $p$-adic cohomology, and by Berkovich \cite{berkovich:integration} using
the nonarchimedean Gel'fand transform.

Although Coleman's construction is in principle quite suitable for machine computation,
this had only been implemented previously in the genus 0 case
\cite{besser-dejeu}.
The purpose of this paper is to present an algorithm for computing single 
Coleman integrals on hyperelliptic curves of good reduction over $\Cp$ for $p>2$,
based on the third author's algorithm for computing the Frobenius action on the
de Rham cohomology of such curves \cite{kedlaya:mw}.
We also describe an implementation of this algorithm in the Sage
computer algebra system. 

For context, we indicate some of the many potential applications
of explicit Coleman integration. Some of these will be treated, with additional
numerical examples, in the first author's upcoming PhD thesis. (Some of these
applications will require additional refinements of our implementation; see Section~\ref{sec6}.)

\begin{itemize}
\item
\textit{Torsion points on curves.}
Coleman's original application of $p$-adic integration was to find torsion points on
curves of genus greater than 1. This could potentially be made effective and automatic.

\item
\textit{$p$-adic heights on curves.}
Investigations into $p$-adic analogues of the conjecture of Birch and Swinnerton-Dyer for Jacobians of hyperelliptic curves require computation of the Coleman-Gross height pairing \cite{coleman-gross}. This global $p$-adic height pairing can, in turn, be decomposed into a sum of local height pairings at each prime. In particular, for $C$ a hyperelliptic curve over $\Q_p$ with $p$ a prime of good reduction and for $D_1,D_2 \in \Div^0(C)$ with disjoint support, the Coleman-Gross $p$-adic height pairing at $p$ is given in terms of the Coleman integral \cite{coleman:torsion} $$h_p(D_1,D_2) = \int_{D_2} \omega_{D_1},$$ for an appropriately constructed
differential $\omega_{D_1}$ associated to the divisor $D_1$. This  pairing is effectively
computable by work of the first author \cite{jsb:heights}. 

Using this work, it should be
possible (using ideas of Besser \cite{besser:sage}) to add in local heights away from $p$, and 
thus compute the Coleman-Gross height pairing on Jacobians of hyperelliptic curves. 
(In genus 1, one can then compare to an alternate computation
based on work of Mazur-Stein-Tate \cite{mazur-stein-tate} and Harvey \cite{harvey:heights}.)

\item
\textit{$p$-adic regulators.}
A related topic to the previous one
is the computation of $p$-adic regulators in higher $K$-theory of arithmetic
schemes, which are expected to relate to special values of $L$-functions. Some computations
in genus 0 have been made by Besser and de Jeu \cite{besser-dejeu}.

\item
\textit{Rational points on curves: Chabauty's method.}
For $C$ a smooth proper curve over $\Z[\frac{1}{N}]$, the 
\emph{Chabauty condition} on $C$ is that $\rank
J(C)\left(\Z\left[\frac{1}{N}\right]\right) < \dim J(C),$ where
$J(C)$ denotes the Jacobian of the curve. When the Chabauty condition holds,
there exists a 1-form $\omega$ on $J(C)^{\mathrm{an}}$ with
$\int_0^P \omega = 0$ for all points $P \in
J(C)\left(\Z\left[\frac{1}{N}\right]\right)$. We might be able to
compute $C(\Z[\frac{1}{N}])$ if we can find all points $P\in
C^{\mathrm{an}}$ such that $\int_0^P\omega =0$.
This method has already been used in many cases, by Coleman and many others; 
see \cite{mccallum-poonen} for a survey (circa 2007). 

To apply Chabauty's method in a typical case, one needs the integral of $\omega$
at some point in a residue disc, with which one can find all zeroes of the integral
in the residue disc. Several methods are suggested in \cite[Remark~8.3]{mccallum-poonen}
for doing this, including Coleman integration. However, no serious attempt has been made
to use numerical Coleman integration in Chabauty's method; it seems likely that it can
handle cases where the other methods suggested in 
\cite[Remark~8.3]{mccallum-poonen} for finding constants of integration
prove to be impractical.

\item
\textit{Rational points on curves: nonabelian Chabauty.}
It may be possible to use (iterated) Coleman integration to find rational points on curves
failing the Chabauty condition, using Kim's
nonabelian Chabauty method \cite{kim:chabauty}. As a demonstration of the method,
Kim \cite{kim:rank1} 
gives an explicit double integral which vanishes
on the integral points of the minimal regular model of
a genus 1 curve over $\Q$ of Mordell-Weil rank 1.
The erratum to \cite{kim:rank1} includes a corrected formula, together with some
numerical examples computed using the methods of this paper.

\item
\textit{$p$-adic polylogarithms and multiple zeta values.}
These have been introduced recently by Furusho \cite{furusho}, but little numerical data
exists so far.
\end{itemize}

\subsubsection*{Acknowledgments.}
The authors thank William Stein for access to his computer \texttt{sage.math.washington.edu}
(funded by NSF grant DMS-0821725), and Robert Coleman and Bjorn Poonen for helpful conversations.
Balakrishnan was supported by a National Defense Science and Engineering Graduate Fellowship
and an NSF Graduate Research Fellowship.
Bradshaw was supported by NSF grant DMS-0713225.
Kedlaya was supported by NSF CAREER grant DMS-0545904,
the MIT NEC Research Support Fund,
and the MIT Cecil and Ida Green Career Development Professorship.
Some development work was carried out at the 2006 MSRI Summer Graduate Workshop
on computational number theory, and the 2007 Arizona Winter School on $p$-adic geometry.

\section{Coleman's theory of $p$-adic integration}\label{sec1}

In this section, we recall Coleman's $p$-adic integration theory (for single integrals only)
in the case of curves with good reduction.
This theory involves some concepts from rigid analytic geometry which it would be hopeless
to introduce in such limited space; some standard references are
\cite{bgr} and \cite{fresnel-vanderput}. (See also \cite[\S 1]{coleman:torsion}.)

Let $\C_p$ be a completed algebraic closure of $\Q_p$, and let $\cO$ be the valuation
subring of $\C_p$.
Choose once and for all a \emph{branch of the $p$-adic logarithm}, i.e., a homomorphism
$\mathrm{Log}: \C_p^\times \to \C_p$ whose restriction to the disc 
$\{x \in \C_p: |x-1| < 1\}$ is given by the logarithm series
$\log(x) = \sum_{i=1}^\infty (1-x)^i/i$.
(The choice of branch has no effect on the integrals on differentials of the second kind,
i.e., everywhere meromorphic differentials with all residues zero.)

We first introduce integrals on discs and annuli within $\P^1$.
\begin{definition} \label{D:usual integration}
Let $I$ be an open subinterval of $[0, +\infty)$.
Let $A(I)$ denote the annulus (or disc)
$\{t \in \mathbb{A}^1_{\C_p}: |t| \in I\}$.
For $\sum_{i \in \Z} c_i t^i\,dt \in \Omega^1_{A(I)/\C_p}$ and $P,Q \in A(I)$,
define
\[
\int_P^Q \sum_{i \in \Z} c_i t^i\,dt = c_{-1} \mathrm{Log}(Q/P) + \sum_{i\neq -1} \frac{c_i}{i+1} (Q^{i+1} - P^{i+1}).
\]
This is easily shown not to depend on the choice of the coordinate $t$.
\end{definition}
\begin{remark}
Note that because of the division by $i+1$ in the formula for the integral, we are unable to
integrate on \emph{closed} discs or annuli.
\end{remark}

We next turn to curves of good reduction.
\begin{definition}
By a \emph{curve} over $\cO$, we will mean a smooth proper connected
scheme $X$ over $\cO$ of relative dimension $1$. Equip the function field
$K(X)$ with the $p$-adic absolute value, so that the elements of $K(X)$ of norm at most $1$
constitute the 
local ring in $X$ of the generic point of the special fibre $\overline{X}$ of $X$.

Let $X_{\Q}$ denote the generic fibre of $X$ as a rigid analytic space.
There is a natural specialization map from $X_{\Q}$ to 
$\overline{X}$; the inverse image of any point of $\overline{X}$ is a subspace
of $X_{\Q}$ isomorphic to an open unit disc. We call such a disc a \emph{residue disc}
of $X$.
\end{definition}

\begin{definition}
Let $X$ be a curve over $\cO$.
By a \emph{wide open subspace} of $X_{\Q}$, we will mean a rigid analytic subspace of $X_{\Q}$
of the form $\{x \in X_{\Q}: |f(x)| < \lambda\}$ for some $f \in K(X)$
of absolute value $1$ and some $\lambda >1$.
\end{definition}

Coleman made the surprising discovery that there is a well-behaved 
integration theory on wide open subspaces
of curves over $\cO$, exhibiting no phenomena of path dependence.
(Note that one needs to consider wide open subspaces even to integrate
differentials which are holomorphic or meromorphic
on the entire curve.)
In the case of hyperelliptic curves,
Coleman's construction of these integrals using Frobenius lifts will be reflected
in our technique for computing the integrals.
For the general case, see \cite[\S 2]{coleman:torsion}, \cite[\S 4]{besser:coleman}, or \cite[Theorem~1.6.1]{berkovich:integration}. 

\begin{theorem}[Coleman] \label{thm:coleman}
We may assign to each curve $X$ over $\cO$ and each wide open subspace $W$ of $X_\Q$
a map $\mu_W: \mathrm{Div}^0(W) \times \Omega^1_{W/\C_p} \to \C_p$, subject to
the following conditions. (Here $\mathrm{Div}(W)$ denotes the free
group on the elements of $W$, and $\mathrm{Div}^0(W)$ denotes the kernel of the degree map
$\deg: \mathrm{Div}(W) \to \Z$ taking each element of $W$ to $1$.)
\begin{enumerate}
\item[(a)] (Linearity)
The map $\mu_W$ is linear on $\mathrm{\Div}^0(W)$ and $\C_p$-linear on $\Omega^1_{W/\C_p}$.
\item[(b)] (Compatibility)
For any residue disc $D$ of $X$
and any isomorphism $\psi: W \cap D \to A(I)$ for some interval $I$,
the restriction of $\mu_W$ to $\mathrm{Div}^0(W \cap D) \times \Omega^1_{W/\C_p}$
is compatible with Definition~\ref{D:usual integration} via $\psi$.
\item[(c)] (Change of variables)
Let $X'$ be another curve over $\cO$, let $W'$ be a wide open subspace of $X'$,
and let $\psi: W \to W'$ be any morphism of rigid spaces relative to an automorphism of $\C_p$.
Then
\begin{equation} \label{eq:change of variables}
\mu_{W'}(\psi(\cdot), \cdot) = \mu_{W}(\cdot, \psi^*(\cdot)).
\end{equation}
\item[(d)] (Fundamental theorem of calculus)
For any $Q = \sum_i c_i (P_i) \in \mathrm{Div}^0(W)$ and any $f \in \mathcal{O}(W)$,
$\mu_{W}(Q, df) = \sum_i c_i f(P_i)$.
\end{enumerate}
\end{theorem}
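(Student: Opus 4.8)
The plan is to reduce the construction of $\mu_W$ to the case of differentials of the second kind plus a small supply of special differentials, and to exploit \emph{Frobenius equivariance} as the rigidifying principle. First I would recall that on a curve $X$ of good reduction, the de Rham cohomology $H^1_{\mathrm{dR}}(W/\C_p)$ of a wide open $W$ decomposes according to the action of a Frobenius lift $\phi$: since $X$ has good reduction, the eigenvalues of $\phi$ on $H^1_{\mathrm{dR}}$ are Weil numbers of absolute value $p^{1/2}$, hence $\phi - 1$ is invertible on the relevant cohomology space. Given a differential $\omega$ of the second kind on $W$, one writes $\phi^*\omega - \omega = df_\omega$ for some $f_\omega \in \mathcal{O}(W)$, uniquely up to an additive constant; this $f_\omega$ is, up to that constant, the Coleman primitive of $\omega$. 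One then \emph{defines} $\mu_W((Q)-(P), \omega) = f_\omega(Q) - f_\omega(P)$, the constant ambiguity cancelling. For a general $\omega \in \Omega^1_{W/\C_p}$ one decomposes $\omega$ into a differential of the second kind, differentials of the third kind $d\mathrm{Log}(g)$ with $g \in K(X)^\times$ (whose integral is prescribed by the chosen branch $\mathrm{Log}$), and exact differentials $df$ (handled by condition (d)); linearity (a) then forces the value on all of $\Omega^1_{W/\C_p}$.

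The key steps, in order, are: (1) verify the Frobenius-eigenvalue estimate so that $\phi^*-1$ is invertible on $H^1_{\mathrm{dR},c}$ (or the appropriate quotient) of each residue disc and annulus, making the local primitives $f_\omega$ exist and be unique up to constants; (2) check \emph{local consistency}, i.e.\ that the $f_\omega$ built Frobenius-equivariantly on $W$ restrict on each intersection $W \cap D$ with a residue disc to the naive primitives of Definition~\ref{D:usual integration} --- this is where compatibility (b) is proved, and it uses that the naive primitive is \emph{also} characterized by killing $\phi^*-1$ up to a constant, because on a disc the only functions fixed by $\phi$ are the constants; (3) establish that the map is independent of the choice of Frobenius lift $\phi$, by comparing two lifts $\phi_1,\phi_2$ through the fact that $\phi_1^* - \phi_2^*$ is exact on cohomology and controlling the resulting constants; (4) prove the change-of-variables formula (c) by noting that a morphism $\psi: W \to W'$ can be made to intertwine Frobenius lifts after composing with a suitable power, so $\psi^*$ carries a primitive of $\omega'$ to a primitive of $\psi^*\omega'$, again matching up to a constant that cancels in the difference; (5) deduce (d) directly, since $\phi^* (df) - df = d(\phi^* f - f)$ exhibits $\phi^* f - f$ as \emph{a} primitive differing from $f$ by a constant.

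I would expect the main obstacle to be step (2), the gluing/compatibility across residue discs and annuli into a single function on all of $W$. The subtlety is that the Frobenius lift does not preserve residue discs individually --- it permutes them according to the Frobenius on the special fibre --- so the equation $\phi^*\omega - \omega = df_\omega$ must be solved globally on $W$ at once, and one must show the global solution's restriction to each $W\cap D$ agrees with the locally defined antiderivative up to a \emph{single} constant per disc, consistently along the annuli connecting the discs. This requires a careful analysis of the rigid cohomology of $W$ as assembled from its residue discs and connecting annuli (a Mayer--Vietoris or "dagger space" argument), together with the observation that an element of $\mathcal{O}(W)$ killed by $\phi^*-1$ and vanishing on one residue disc vanishes identically --- the analytic-continuation principle that underlies the absence of path dependence. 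Once this rigidity lemma is in hand, the remaining conditions (a), (c), (d) follow formally, and (b) is precisely its content.
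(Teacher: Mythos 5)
The paper does not actually prove Theorem~\ref{thm:coleman}: it is stated as Coleman's result, with the reader sent to Coleman, Besser, and Berkovich for the construction. So there is no in-paper argument to compare yours against, only the informal Frobenius-lift picture that the paper describes and then exploits computationally. At the level of strategy your outline is sound: Frobenius equivariance is the right rigidifying principle, your step (1) (the Weil-number eigenvalue estimate making $\phi^* - 1$ invertible on cohomology) is correct and essential, step (3) (independence of the Frobenius lift) is a real ingredient, and your concern in step (2) about gluing local primitives across the annuli connecting residue discs is precisely where the real content lies.

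There is, however, a genuine error in your opening paragraph that would derail the proof as written. You write $\phi^*\omega - \omega = df_\omega$ and assert that $f_\omega$ is ``up to a constant, the Coleman primitive of $\omega$,'' then define $\mu_W((Q)-(P), \omega) = f_\omega(Q) - f_\omega(P)$. This conflates a primitive of $\phi^*\omega - \omega$ with a primitive of $\omega$. The Coleman primitive $F_\omega$ is the locally analytic function satisfying $dF_\omega = \omega$; it is related to the rigid analytic function $f_\omega$ by the functional equation $F_\omega \circ \phi - F_\omega = f_\omega + \mathrm{const}$, and extracting $F_\omega(Q) - F_\omega(P)$ from this still requires inverting $\phi^* - 1$. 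That inversion is exactly what the fundamental linear system in Algorithm~\ref{algo:nonteich} performs when it solves $\sum_j (M-I)_{ij}\int_P^Q\omega_j = \cdots$. In the simplest eigenvector case $\phi^*\omega = \lambda\omega + df_\omega$ with $P, Q$ Teichm\"uller points, the correct value is $(\lambda-1)^{-1}\bigl(f_\omega(Q) - f_\omega(P)\bigr)$, not $f_\omega(Q) - f_\omega(P)$. Your step (1) makes the invertibility available, but your definition of $\mu_W$ never uses it --- that is the gap. Once this is corrected, steps (2)--(5) give a reasonable, if terse, sketch of Coleman's argument.
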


\begin{remark}
One cannot expect path independence in the case of bad reduction. 
For instance, an elliptic curve over $\C_p$ with bad 
reduction admits a Tate uniformization, so its logarithm map has nonzero periods in general.
In Berkovich's theory of integration, this occurs because the nonarchimedean analytic
space associated to this curve $X$ has nontrivial first homology.
\end{remark}

\section{Explicit integrals for hyperelliptic curves}\label{sec3}

We now specialize to the situation where $p>2$ and $X$ is a genus $g$ hyperelliptic curve over 
an unramified extension $K$ of $\Qp$ having good reduction. 
We will assume in addition that we have been given a model of $X$ of the form $y^2 = f(x)$ such that $\deg f(x) = 2g+1$ and $f$ has no repeated roots modulo $p$. 
(This restriction is inherited from \cite{kedlaya:mw}, where it is used to simplify
the reduction procedure. One could reduce to this case 
after possibly replacing $K$ by a larger unramified extension of $\Qp$, 
by performing a linear fractional transformation in $x$ to put one root at infinity, 
thus reducing the degree from $2g+2$ to $2g+1$.)
We will distinguish between \emph{Weierstrass} and \emph{non-Weierstrass} residue discs of $X$,
which respectively correspond to Weierstrass and non-Weierstrass points of $\overline{X}$.

To discuss the differentials we will be integrating, we review
a core definition from \cite{kedlaya:mw}.
Let $X'$ be the affine curve obtained by deleting the Weierstrass points from $X$, 
and let $A = K[x,y,z]/(y^2 - f(x),yz -1)$ be the coordinate ring of $X'$.
\begin{definition}The Monsky-Washnitzer (MW) weak completion of $A$ is the ring $A^{\dagger}$
consisting of infinite sums of the form $$\left\{\sum_{i = -\infty}^{\infty} \frac{B_i(x)}{y^i},\; B_i(x) \in K[x], \deg B_i \leq 2g\right\},$$ further subject to the condition that $v_p(B_i(x))$ grows faster than a linear function of $i$ as $i \ra \pm \infty$.
We make a ring out of these using the relation
$y^2 = f(x)$.
\end{definition} 
These functions are holomorphic on wide opens, so we will integrate 1-forms \begin{equation}\label{omega}\omega = g(x,y)\frac{dx}{2y},\quad g(x,y) \in A^{\dagger}. 
\end{equation}
Note that we only consider 1-forms which are \emph{odd}, i.e., which are negated by the
hyperelliptic involution. Even 1-forms can be written in terms of $x$ alone, and so can
be integrated directly as in Definition~\ref{D:usual integration}.
(This last statement would fail if we had taken $A^\dagger$ to be 
the full $p$-adic completion of $A$,
rather than the weak completion. This observation is the basis for Monsky-Washnitzer's
formal cohomology, which is used in \cite{kedlaya:mw}.)

Note that the class of allowed forms includes those meromorphic differentials on $X$ whose poles all belong to Weierstrass residue
discs. For some applications (e.g., $p$-adic canonical heights), it is necessary to integrate meromorphic differentials with poles in
non-Weierstrass residue discs. These will be discussed in \cite{jsb:heights}.

Note also that for ease of exposition, we describe all of our algorithms as if it were possible to compute exactly
in $A^\dagger$. This is not possible for two reasons: the elements of $A^\dagger$ correspond to infinite series, and
the coefficients of these series are polynomials with $p$-adic coefficients. In practice, each computation will be made with
suitable $p$-adic approximations of the truly desired quantities, so one must keep track of how much $p$-adic precision
is needed in these estimates in order for the answers to bear a certain level of $p$-adic accuracy. We postpone this discussion to
\S~\ref{precision}.

\subsection{A basis for de Rham cohomology}
\label{de rham}

We first note that any odd differential $\omega$ as in \eqref{omega} can be written
uniquely as
\begin{equation} \label{basis diffs}
\omega = df + c_0\omega_0 + \cdots +
c_{2g-1}\omega_{2g-1}
\end{equation} 
with $f \in A^\dagger$, $c_i \in K$, and
\begin{equation} \label{wi}
\omega_i = \frac{x^i\,dx}{2y} \qquad (i=0,\dots,2g-1).
\end{equation}
That is, the $\omega_i$ form a 
basis of the odd part of the de Rham cohomology of $A^{\dagger}$.
The process of putting $\omega$ in the form \eqref{basis diffs}, 
using the relations
\begin{align*}
y^2 &= f(x),\\
% 2ydy &= f'(x)dx,\\
 d(x^iy^j) &= \left(2ix^{i-1}y^{j+1}+ jx^if'(x)y^{j-1} \right) \frac{dx}{2y},
\end{align*}
can be made algorithmic; see \cite[\S 3]{kedlaya:mw}.
(Briefly, one uses the first relation to reduce high powers of $x$, and the second to 
reduce large positive and negative powers of $y$.)
Using properties from Theorem~\ref{thm:coleman} 
(linearity and the fundamental theorem of calculus),
the integration of $\omega$ reduces effectively to the integration of the $\omega_i$.

It may be convenient for some purposes to use a different basis of de Rham cohomology.
For instance, the basis $x^i\,dx/2y^3 \,(i=0,\dots,2g-1)$ is \emph{crystalline}
(see the erratum to \cite{kedlaya:mw}), so Frobenius will act via a matrix
with $p$-adically integral entries.

\subsection{Tiny integrals}

We refer to any Coleman
integral of the form $\int_P^Q \omega$ in which $P,Q$ lie in the same residue disc
(Weierstrass or not)
as a \emph{tiny integral}.
As an easy first case, we give an algorithm to compute tiny integrals of basis differentials.
\begin{algorithm}[Tiny Coleman integrals]\label{tiny} \;\\
%Let $P, Q \in X(\C_p)$ be points in the same residue disc, neither equal to the point at infinity.
\textbf{Input:} Points $P, Q \in X(\C_p)$ in the same residue disc (neither equal to the point at infinity) and a basis differential $\omega_i$. \\
\textbf{Output:} The integral $\int_P^Q \omega_i$.
\begin{enumerate}
\item Construct a linear interpolation from $P$ to $Q$. For instance, in a
non-Weierstrass residue disc, we may take
\begin{align*} x(t) &= (1-t)x(P) + tx(Q)\\
y(t) &= \sqrt{f(x(t))},\end{align*} where $y(t)$ is expanded as a formal power series in $t$.

\item Formally integrate the power series in $t$:$$\int_P^Q \omega_i = \int_P^Q x^i\frac{dx}{2y} = \int_0^1 \frac{x(t)^i}{2y(t)}\frac{dx(t)}{dt}dt.$$\end{enumerate}\end{algorithm}

\begin{remark} \label{more tiny}
One can similarly integrate any $\omega$ holomorphic in the residue disc containing $P$ and $Q$.
If $\omega$ is only meromorphic in the disc, but has no pole at $P$ or $Q$, we can
first make a polar decomposition, i.e., wrie $\omega$ as a holomorphic differential on the disc
plus some terms of the form $c/(t - r)^i$, and integrate the latter terms directly.
(If $\omega$ is everywhere meromorphic, this is achieved by a partial fractions decomposition.)
\end{remark}

\subsection{Non-Weierstrass discs}

We next compute integrals of the form $\int_P^Q \omega_i$
in which $P,Q \in X(\C_p)$ lie in distinct non-Weierstrass residue discs.
The method of tiny integrals is not available;
we instead employ Dwork's principle of analytic continuation along Frobenius,
in the form of Kedlaya's algorithm \cite{kedlaya:mw} for calculating the action of 
Frobenius on de Rham cohomology. 
Note that we calculate the integrals $\int_P^Q \omega_i$
for all $i$ simultaneously. (We modify the presentation in \cite{kedlaya:mw} by
keeping track of exact differentials, which are irrelevant for computing zeta functions.)

\begin{algorithm}[Kedlaya's algorithm]\label{kedlaya}\;\\
\textbf{Input:} The basis differentials $\{\omega_i\}_{i=0}^{2g-1}$.\\
\textbf{Output:} Functions $f_i \in A^{\dagger}$ and a $2g \times 2g$ matrix $M$ over $K$ such that $\phi^*(\omega_i) =  df_i + \sum_{j=0}^{2g-1}M_{ij} \omega_j$ for a $p$-power lift of Frobenius $\phi$.
\begin{enumerate}
\item 
Since $K$ is an unramified extension of $\Qp$,
it carries a unique automorphism $\phi_K$ lifting the 
Frobenius automorphism $x \mapsto x^p$ on its residue field.
Extend $\phi_K$ to a Frobenius lift on $A^{\dagger}$ by setting 
\begin{align*}
\phi(x) &= x^p,\\
\phi(y) &= y^p\left(1 + \frac{\phi_K(f)(x^p)-f(x)^p}{f(x)^p}\right)^{1/2}\\
&= y^p \sum_{i=0}^{\infty}\binom{1/2}{i}\frac{(\phi_K(f)(x^p)-f(x)^p)^i}{y^{2pi}},
\end{align*}
noting the series converges in $A^{\dagger}$ because $\phi_K(f)(x^p)-f(x)^p$ has positive valuation. 
(This choice of $\phi(y)$ ensures that
$\phi(y)^2 = \phi(f(x))$, so that the action on $A^{\dagger}$ is well-defined.
\item 
Use a Newton iteration to compute $y/\phi(y)$. Then for $i=0,\dots,2g-1$, proceed as in 
\S~\ref{de rham} to write
\begin{equation} \label{phi de rham}
\phi^*(\omega_i) = p x^{pi+p-1} \frac{y}{\phi(y)} \frac{dx}{2y} = df_i + \sum_{j=0}^{2g-1} 
M_{ij} \omega_j
\end{equation}
for some $f_i \in A^\dagger$ and some $2g \times 2g$ matrix $M$ over $K$.
\end{enumerate}
\end{algorithm}

We may use Algorithm~\ref{kedlaya} to compute Coleman integrals between endpoints in
non-Weierstrass residue discs, as follows. (Note that our recipe is essentially Coleman's
construction of the integrals in this case.)
\begin{algorithm}[Coleman integration in non-Weierstrass discs]\label{algo:nonteich}\;\\
\textbf{Input:} The basis differentials $\{\omega_i\}_{i=0}^{2g-1}$, points $P,Q \in X(\C_p)$ in non-Weierstrass residue discs, and a positive integer $m$ such that the residue fields of $P,Q$ are contained in $\mathbb{F}_{p^m}$.\\
\textbf{Output:} The integrals $\{ \int_P^Q \omega_i\}_{i=0}^{2g-1}$.
%Let $\omega$ be an odd differential on $X$.
%Let $P,Q \in X(\C_p)$ be points in non-Weierstrass residue discs. 
%Choose a positive integer $m$ such that the residue fields of $P$ and $Q$
%are contained in $\mathbb{F}_{p^m}$. 

\begin{enumerate}

\item 
Calculate the action of the $m$-th power of Frobenius on each basis element (see 
Remark~\ref{remark:frobenius power}):
\begin{equation} \label{frobenius power}
(\phi^m)^* \omega_i = df_i + \sum_{j=0}^{2g-1} M_{ij}\omega_j.
\end{equation}

\item 
By change of variables 
(see Remark~\ref{rem:nonteich}), we obtain 
\begin{equation}\label{linear}\sum_{j=0}^{2g-1}
  (M-I)_{ij}\int_P^Q \omega_j = f_i(P)-f_i(Q) -
\int_P^{\phi^m(P)}\omega_i -\int_{\phi^m(Q)}^Q\omega_i
\end{equation}
(the \emph{fundamental linear system}).
As the eigenvalues of the matrix $M$ are algebraic integers of 
$\C_p$-norm $p^{m/2} \neq 1$ (see \cite[\S 2]{kedlaya:mw}), the matrix $M-I$ is invertible, 
and we may solve \eqref{linear} to obtain the integrals $\int_P^Q \omega_i$.

\end{enumerate}
\end{algorithm}

\begin{remark} \label{remark:frobenius power}
To compute the action of $\phi^m$, first perform Algorithm~\ref{kedlaya} to write
$$\phi^* \omega_i = dg_i + \sum_{j=0}^{2g-1} B_{ij}\omega_j.$$ 
If we view $f,g$ as column vectors and $M,B$ as matrices, we then have
\begin{align*}
f &= \phi^{m-1}(g) + B \phi^{m-2}(g) + \cdots + B \phi_K(B) \cdots \phi_K^{m-2}(B) g \\
M &= B \phi_K(B) \cdots \phi_K^{m-1}(B).
\end{align*}
\end{remark}
 
\begin{remark} \label{rem:nonteich}
We obtain \eqref{linear} as follows.
By change of variables,\begin{align*}\int_{\phi^m(P)}^{\phi^m(Q)}\omega_i &= \int_{P}^{Q}(\phi^m)^*\omega_i\\
&= \int_{P}^{Q}(df_i + \sum_{j=0}^{2g-1}M_{ij}\omega_j)\\
&= f_i(Q)-f_i(P) + \sum_{j=0}^{2g-1}M_{ij}\int_P^Q \omega_j.\end{align*}
Adding $\int_{P}^{\phi^m(P)}\omega_i + \int_{\phi^m(Q)}^Q\omega_i$ to both sides of this equation yields $$\int_P^Q\omega_i = \int_P^{\phi^m(P)}\omega_i + \int_{\phi^m(Q)}^Q\omega_i + f_i(Q) - f_i(P) + \sum_{j=0}^{2g-1}M_{ij}\int_P^Q \omega_j,$$ which is equivalent to \eqref{linear}.
\end{remark}

\begin{definition}
A \emph{Teichm\"{u}ller point} of $X_\Q$ is a point fixed by some power of $\phi$.
Each non-Weierstrass residue disc contains a unique such point: if $(\overline{x}, \overline{y}) \in \overline{X}$
is a non-Weierstrass point, the Teichm\"{u}ller point in its residue disc has $x$-coordinate
equal to the usual Teichm\"uller lift of $x$. This leaves two choices for the $y$-coordinate,
exactly one of which has the correct reduction modulo $p$. Note that Teichm\"uller points
are always defined over finite \emph{unramified} extensions of $\Q_p$.
\end{definition}

\begin{remark}
A variant of Algorithm~\ref{algo:nonteich} is to first
find the Teichm\"uller points $P', Q'$ in the residue discs of $P,Q$,
then note that from the fundamental linear system \eqref{linear}, we have
\begin{equation}\label{linear2}
\sum_{j=0}^{2g-1}
(M-I)_{ij}\int_{P'}^{Q'} \omega_j = f_i(P')-f_i(Q').
\end{equation}
From \eqref{linear2}, we obtain the integrals $\int_{P'}^{Q'} \omega_i$. 
Finally, write $\int_P^Q \omega_i - \int_{P'}^{Q'} \omega_i$ as the sum 
$\int_P^{P'} \omega_i +
\int_{Q'}^Q \omega_i$ of tiny integrals.
\end{remark}

\subsection{Weierstrass endpoints of integration}\label{sec4}

Suppose now that $P,Q$ lie in different residue discs,
at least one of which is Weierstrass.
Since a differential $\omega$ of the form \eqref{omega} is not meromorphic over
Weierstrass residue discs, we cannot always even define $\int_P^Q \omega$,
let alone compute it.
We will thus assume (to cover most cases arising in applications)
that $\omega$ is everywhere meromorphic, with no pole at either 
$P$ or $Q$. We then make the following observation.
\begin{lemma}\label{w_to_nw}
Let $\omega$ be an odd, everywhere meromorphic differential on $X$. Choose $P,Q \in X(\C_p)$ which are not poles of 
$\omega$, with $P$ Weierstrass.
Then for $\iota$ the hyperelliptic involution,
$\int_P^Q \omega = \frac{1}{2}\int_{\iota(Q)}^Q \omega$. In particular,
if $Q$ is also a Weierstrass point, then $\int_P^Q \omega = 0$.\end{lemma}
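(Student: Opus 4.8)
The plan is to deduce the identity formally from the change-of-variables axiom (Theorem~\ref{thm:coleman}(c)) applied to the hyperelliptic involution $\iota$, using only that $\iota$ fixes every Weierstrass point and that, $\omega$ being odd, $\iota^{*}\omega = -\omega$.

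First I would choose a wide open on which to work. Let $S \subset X_\Q$ be the finite set of poles of $\omega$; since $\omega$ is odd, $\iota(S) = S$, and by hypothesis $P, Q \notin S$, hence also $\iota(Q) \notin S$. By a routine construction --- for instance, delete from $X_\Q$ a sufficiently small $\iota$-stable closed disc around each point of $S$, with radii chosen small enough that none of $P$, $Q$, $\iota(Q)$ is removed --- one obtains a wide open subspace $W$ of $X_\Q$ that is stable under $\iota$, contains $P$, $Q$ and $\iota(Q)$, and is disjoint from $S$, so that $\omega$ restricts to an element of $\Omega^1_{W/\C_p}$. Then $\iota$ restricts to an automorphism $W \to W$ over the identity automorphism of $\C_p$.

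Now I would apply \eqref{eq:change of variables} with $\psi = \iota$ to the divisor $(Q)-(P) \in \mathrm{Div}^0(W)$. Since $\iota(P) = P$, the pushforward of this divisor is $(\iota(Q))-(P)$, so the identity reads
\[
\int_P^{\iota(Q)}\omega = \mu_W\big((\iota(Q))-(P),\,\omega\big) = \mu_W\big((Q)-(P),\,\iota^{*}\omega\big) = -\int_P^Q\omega .
\]
On the other hand, linearity in the divisor variable (Theorem~\ref{thm:coleman}(a)) gives $\int_P^{\iota(Q)}\omega = \int_P^Q\omega + \int_Q^{\iota(Q)}\omega$. Combining the two relations, $2\int_P^Q\omega = -\int_Q^{\iota(Q)}\omega = \int_{\iota(Q)}^Q\omega$, which is the asserted formula. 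If $Q$ is also Weierstrass then $\iota(Q) = Q$, so the right-hand side equals $\mu_W(0,\omega) = 0$ and hence $\int_P^Q\omega = 0$.

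The only step needing genuine care is the construction of the $\iota$-stable wide open $W$ avoiding the poles of $\omega$ while still containing $P$, $Q$ and $\iota(Q)$ --- a standard but slightly fussy exercise in rigid geometry; everything else is a direct manipulation of the axioms of Theorem~\ref{thm:coleman}. (Note also that all integrals appearing are taken with respect to the same fixed branch of $\mathrm{Log}$, so no ambiguity of branch arises; and for $\omega$ of the second kind the statement is independent of that choice in any case.)
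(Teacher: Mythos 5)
Your proof is correct and follows essentially the same route as the paper: apply the change-of-variables axiom under $\iota$ (using $\iota(P)=P$ and $\iota^*\omega=-\omega$) to get $\int_P^{\iota(Q)}\omega = -\int_P^Q\omega$, then use additivity in the endpoints to conclude. The paper's version is just terser, leaving the construction of an $\iota$-stable wide open $W$ implicit, which you correctly identify as the only point needing care.
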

\begin{proof}Let $I:=\int_P^Q \omega = \int_P^{\iota(Q)} (- \omega) = \int_{\iota(Q)}^P \omega$. Then by additivity in the endpoints, we have $\int_{\iota(Q)}^Q \omega = 2I$, from which the result follows.\end{proof}

If $P$ belongs to a Weierstrass residue disc while $Q$ does not,
we find the Weierstrass point $P'$ in the disc of $P$, then apply
Lemma~\ref{w_to_nw} to write
\begin{equation} \label{one weierstrass disc}
\int_P^Q \omega = \int_P^{P'} \omega + \frac{1}{2} \int_{\iota(Q)}^{Q} \omega.
\end{equation}
The first integral on the right side of \eqref{one weierstrass disc} is tiny,
while the second integral involves two points in non-Weierstrass residue discs,
and so may be computed as in the previous section.
The situation is even better if $P,Q$ both belong to residue discs containing
respective Weierstrass points $P',Q'$: in this case,
by Lemma~\ref{w_to_nw}, $\int_P^Q \omega$ equals the sum $\int_P^{P'} \omega
+ \int_{Q'}^Q \omega$ of tiny integrals.

\begin{remark} \label{rem:iterated}
Beware that Lemma~\ref{w_to_nw} does not generalize to iterated integrals.
For instance, for double integrals, if both integrands are odd, the total integrand
is even, so the argument of Lemma~\ref{w_to_nw} tells us nothing. It is thus worth
considering alternate approaches for dealing with Weierstrass discs, which may
generalize better to the iterated case. We concentrate on the case where 
$P$ lies in a Weierstrass residue disc but $Q$ does not, as we may reduce to this case
by splitting $\int_P^Q \omega = \int_P^R \omega + \int_R^Q \omega$ for some
auxiliary point $R$ in a non-Weierstrass residue disc.

In Algorithm~\ref{algo:nonteich}, the form $f_i$ belongs to $A^\dagger$ and so
need not converge at $P$. However, it does converge at any point $R$ near the boundary
of the disc, i.e., in the complement of a certain smaller disc which can be 
bounded explicitly. We may thus write $\int_P^Q \omega_i = \int_P^R \omega_i + 
\int_R^Q \omega_i$ for suitable $R$ in the disc of $P$, to obtain an analogue
of the fundamental linear system \eqref{linear}. Similarly, when we write
$\omega$ as in \eqref{basis diffs}, we can find $R$ close enough to
the boundary of the disc of $P$ so that $f$ converges at $R$, use \eqref{basis diffs}
to evaluate $\int_R^Q \omega$, then compute $\int_P^R \omega$ as a tiny integral.
One defect of this approach is that forcing $R$ to be close to the boundary of
the residue disc of $P$ forces $R$ to be defined over a highly ramified extension of 
$\Q_p$, over which computations are more expensive.

An alternate approach exploits the fact that for $P$ in the infinite residue 
disc but distinct from the point at infinity, we may compute $\int_P^Q \omega$
directly using Algorithm~\ref{algo:nonteich}.
This works because both the Frobenius lift and the reduction process
respect the subring of $A^\dagger$ consisting of functions which are meromorphic
at infinity. When $P$ lies in a finite Weierstrass residue disc, we may reduce to the 
previous case using a change of variables on the $x$-line to move $P$ 
to the infinite disc. However, one still must use the approach of the previous
paragraph to reduce evaluation of $\int_P^Q \omega$ to evaluation of the
$\int_P^Q \omega_i$.
\end{remark}

\section{Implementation notes and precision}\label{sec5}

We have implemented the above algorithms in Sage \cite{sage} for curves defined over
$\Q_p$. In doing so, we made the following
observations.

\subsection{Precision estimates}
\label{precision}

For a tiny integral, the precision of the result 
depends on the truncation of the power series computed. 
Here is the analysis for a non-Weierstrass disc; the analysis for a Weierstrass disc, 
using a different local interpolation, is similar. (For points over ramified 
extensions, one must also account for the ramification index in the bound,
but it should be clear from the proof how this is done.)

\begin{proposition} \label{prop:tiny precision}
Let $\int_P^Q \omega$ be a tiny integral in a non-Weierstrass residue disc, with $P,Q$ defined over
an unramified extension of $K$ and accurate to $n$ digits of precision. 
Let $(x(t),y(t))$ be the local interpolation between $P$ and $Q$ defined by
\begin{align*}x(t) &= x(P)(1-t) + x(Q)t = x(P) + t(x(Q)-x(P))\\
y(t) &= \sqrt{f(x(t))}.\end{align*}
Let $\omega = g(x,y)dx$ be a differential of the second kind such that
$h(t) = g(x(t), y(t))$ belongs to $\cO[[t]]$.
If we truncate $h(t)$ modulo $t^m$, then
the computed value of the integral $\int_P^Q \omega$ will be correct to 
$\min\{n, m+1- \lfloor \log_p (m+1) \rfloor\}$ digits of (absolute) precision.
\end{proposition}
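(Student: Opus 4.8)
The plan is to split the error of Algorithm~\ref{tiny} into a \emph{truncation} part and an \emph{input-precision} part, bound each $p$-adically, and recombine them via the ultrametric inequality.

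First I would set $\delta=x(Q)-x(P)$, so that along the interpolation $x'(t)=\delta$ is constant and $\omega$ pulls back to $\delta\,h(t)\,dt$ with $h(t)=\sum_{k\ge 0}a_k t^k$; then $\int_P^Q\omega=\delta\sum_{k\ge 0}a_k/(k+1)$ (the series converges by the estimate below, and equals the Coleman integral by Definition~\ref{D:usual integration} and the compatibility in Theorem~\ref{thm:coleman}). Truncating $h$ modulo $t^m$ makes the algorithm return the partial sum $\delta\sum_{k=0}^{m-1}a_k/(k+1)$, computed moreover from $x(P),x(Q)$ known only to $n$ digits. Hence the computed value differs from $\int_P^Q\omega$ by $E_{\mathrm{trunc}}+E_{\mathrm{prec}}$ with $E_{\mathrm{trunc}}=\delta\sum_{k\ge m}a_k/(k+1)$, and it suffices to prove $v_p(E_{\mathrm{trunc}})\ge(m+1)-\lfloor\log_p(m+1)\rfloor$ and $v_p(E_{\mathrm{prec}})\ge n$.

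The truncation bound rests on the estimate $v_p(a_k)\ge k$. Writing $u=x-x(P)$ for the local parameter of the residue disc and $g=\sum_k c_k u^k$ on that disc, one has $x(t)-x(P)=t\delta$, hence $a_k=c_k\delta^k$; the $c_k$ lie in $\cO$ because the integrand is integral on a non-Weierstrass residue disc (for $\omega_i=x^i\,dx/2y$ this is immediate, $y$ being a unit there and $f\in\cO[x]$), while $v_p(\delta)\ge 1$ since $P$ and $Q$ lie in a common residue disc and are defined over an unramified extension. This is really what the hypothesis ``$h\in\cO[[t]]$'' is buying here, and it is also what makes the tail converge at $t=1$. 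Consequently
\[
v_p\!\left(\frac{\delta\,a_k}{k+1}\right)=v_p(\delta)+v_p(a_k)-v_p(k+1)\ge 1+k-\lfloor\log_p(k+1)\rfloor=(k+1)-\lfloor\log_p(k+1)\rfloor,
\]
using $v_p(k+1)\le\lfloor\log_p(k+1)\rfloor$ (since $p^{v_p(k+1)}\mid k+1$ and $v_p(k+1)\in\Z$). By ultrametricity $v_p(E_{\mathrm{trunc}})\ge\inf_{k\ge m}\big[(k+1)-\lfloor\log_p(k+1)\rfloor\big]$, and since $\ell\mapsto\ell-\lfloor\log_p\ell\rfloor$ is non-decreasing on integers $\ell\ge 1$ (passing from $\ell$ to $\ell+1$ raises $\lfloor\log_p\ell\rfloor$ by at most $1$), that infimum equals $(m+1)-\lfloor\log_p(m+1)\rfloor$.

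For $E_{\mathrm{prec}}$ I would track precision through the steps of Algorithm~\ref{tiny}: because $p>2$ and $f(x(P))$ is a unit, $y(t)=\sqrt{f(x(t))}$ is obtained by a Hensel square root that preserves $p$-adic precision, and forming $x(t)=x(P)+t\delta$ and $h(t)=g(x(t),y(t))$ involves only ring operations on integral quantities, so the computed coefficients agree with the true $a_k$ to $n$ digits; the finitely many divisions by $k+1\le m$ in the partial sum cause no further loss, since the terms $\delta a_k/(k+1)$ already have valuation $\ge 1$. Thus $v_p(E_{\mathrm{prec}})\ge n$, and combining the two estimates shows the computed integral is correct to $\min\{n,\,m+1-\lfloor\log_p(m+1)\rfloor\}$ digits. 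I expect the real work to be bookkeeping rather than any single hard step --- pinning down the integrality that forces $v_p(a_k)\ge k$ (which also secures convergence), and the elementary but slightly fussy monotonicity of $\ell-\lfloor\log_p\ell\rfloor$ --- while the Weierstrass-disc analogue mentioned after the statement should go through the same way once the ramification index is inserted into the bounds.
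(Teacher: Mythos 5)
Your proposal follows the same structure as the paper's proof: change variables to $t' = t\,\delta$ with $\delta = x(Q)-x(P)$, exploit $v_p(\delta)\ge 1$ together with integrality of the local Taylor coefficients $c_k$ (so that $a_k = c_k\delta^k$ has $v_p(a_k)\ge k$), bound the truncation tail termwise by $(k+1)-\lfloor\log_p(k+1)\rfloor$ and pass to the infimum over $k\ge m$ via the monotonicity of $\ell\mapsto \ell-\lfloor\log_p\ell\rfloor$, and separately argue the input-precision error is $\ge n$. The truncation half is complete and matches the paper.

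The one soft spot is the justification of the input-precision bound. You assert that the divisions by $k+1$ in the partial sum ``cause no further loss, since the terms $\delta a_k/(k+1)$ already have valuation $\ge 1$.'' That inference is not valid: a $p$-adic number having large valuation says nothing about its \emph{precision}, and dividing a quantity known mod $p^N$ by $k+1$ always reduces its absolute precision by $v_p(k+1)$, regardless of the valuation of the quotient. The correct accounting, which is what the paper does, tracks powers of $\delta$: since relative precision is preserved under multiplication, $\delta^{i+1}$ is known to absolute precision roughly $n + i\cdot v_p(\delta)\ge n+i$, so the term $c_i\delta^{i+1}/(i+1)$ is still accurate to at least $n + i - \lfloor\log_p(i+1)\rfloor \ge n$ digits --- the precision gained from the higher power of $\delta$ more than pays for the division by $i+1$. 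Your final estimate is correct and the overall route is the paper's, so this is a local misstatement in the bookkeeping rather than a structural gap, but the reasoning as written would give wrong conclusions if transplanted to a situation where that compensating gain were absent.
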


\begin{proof}
 Let $t' = t(x(Q)-x(P))$.
 As $P,Q$ are in the same residue disc and are defined over an unramified extension
of $K$, we have $v_p(x(Q)-x(P)) \geq 1$. If we expand $g(x(t'),y(t')) = 
\sum_{i=0}^\infty c_i (t')^i$, then by hypothesis $c_i \in \cO$. Thus 
\begin{align*}\int_P^Q \omega &= \int_P^Q g(x,y)dx\\
&= \int_0^1 g(x(t),y(t))dx(t)\\
&= \int_0^{x(Q)-x(P)} g(x(t'),y(t')) dt'\\
&= \int_0^{x(Q)-x(P)} \sum_{i=0}^{\infty} c_i (t')^i dt'\\
&= \sum_{i=0}^\infty \frac{c_i}{i+1} (x(Q) - x(P))^{i+1}.
\end{align*}
The effect of omitting $c_i (t')^i$ 
from the expansion of $g(x(t'), y(t'))$ for some $i \geq m$ is to change the final
sum by a quantity of valuation at least $i+1 - \lfloor \log_p (i+1) \rfloor
\geq m+1 - \lfloor \log_p (m+1) \rfloor$. The effect of the ambiguity in $P$ and $Q$
is that the computed value of $(x(Q)-x(P))^{i+1}$ differs from the true value
by a quantity of valuation at least $i+1-\lfloor \log_p(i+1) \rfloor + n-1 \geq n$.
\end{proof}

For Coleman integrals between different residue discs,
which we may assume are non-Weierstrass thanks to \S~\ref{sec4},
one must first account for the precision loss in Algorithm~\ref{kedlaya}.
According to \cite[Lemmas~2,3]{kedlaya:mw} and the erratum to \cite{kedlaya:mw}
(or \cite{harvey:matfrob}),
working to precision $p^N$ in Algorithm~\ref{kedlaya} produces the $f_i, M_{ij}$
accurately modulo $p^{N-n}$ for $n = 1 + \lfloor \log_p \max\{N, 2g+1\} \rfloor$.

We must then take into account the objects involved in the linear system (\ref{linear}),
as follows.

\begin{proposition}\label{m}Let $\int_P^Q \omega$ be a Coleman integral, with $\omega$ a differential of the second kind and with $P,Q$ in non-Weierstrass residue discs, defined over an unramified extension of $\Q_p$, and accurate to $n$ digits of precision. 
Let $\Frob$ be the matrix %\footnote{Note that the matrix of Frobenius $\Frob$ is calculated to the precision as in \cite{harvey:matfrob}.}
of the action of Frobenius on the basis differentials.  
Set $B = \Frob^t - I$, and let $m = v_p(\det(B))$.  
Then the computed value of the integral $\int_P^Q \omega$ will 
be accurate to $n-\max\{m , \lfloor \log_p n \rfloor\}$ digits of precision.\end{proposition}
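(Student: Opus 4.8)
The plan is to track precision through the solution of the fundamental linear system \eqref{linear}. First I would recall the ingredients: working $p$-adically, Algorithm~\ref{kedlaya} (with the erratum precision estimate) produces the $f_i$ and the entries of $M$ to some precision, and then one solves $B X = v$ where $B = \Frob^t - I = M - I$ (up to transpose conventions), $X$ is the column vector of the desired integrals $\int_P^Q \omega_j$, and $v$ is the right-hand side of \eqref{linear}, whose entries are $f_i(P) - f_i(Q) - \int_P^{\phi^m(P)}\omega_i - \int_{\phi^m(Q)}^Q \omega_i$. The two subsidiary integrals on non-Weierstrass boundary discs are tiny, hence handled by Proposition~\ref{prop:tiny precision}; the point evaluations $f_i(P), f_i(Q)$ converge because $f_i \in A^\dagger$ and $P,Q$ lie in non-Weierstrass residue discs. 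So the entries of $v$ are known to (essentially) $n$ digits, modulo a $\lfloor \log_p n \rfloor$-type loss coming from the tiny-integral sub-computation and the series evaluation; this is where one of the two terms in the $\max$ originates.

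Next I would analyze the linear solve itself. Since $B$ has $p$-adically integral entries (here one uses that the eigenvalues of $\Frob$ are algebraic integers of norm $p^{m'/2}$, so $B$ is integral), Cramer's rule gives $X = \operatorname{adj}(B)\, v / \det(B)$. The adjugate $\operatorname{adj}(B)$ has integral entries, so multiplying by it does not lose precision; but dividing by $\det(B)$, which has valuation $m = v_p(\det B)$, shifts valuations down by $m$ and hence costs $m$ digits of absolute precision. Combining: the input vector $v$ is accurate to roughly $n - \lfloor \log_p n\rfloor$ digits, and the division by $\det(B)$ degrades this by a further $m$, so one might naively expect $n - \lfloor \log_p n \rfloor - m$. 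The sharper bound $n - \max\{m, \lfloor \log_p n\rfloor\}$ asserts these two losses do not both bite simultaneously, which I would justify as follows: one arranges to carry enough internal precision in Algorithm~\ref{kedlaya} and in the tiny integrals so that the only genuine absolute-precision loss in the final answer is the single division by $\det(B)$ (cost $m$) when $m$ dominates, or the log loss when that dominates; the point is that the log-type loss affects the number of digits one must compute internally but is absorbed if $m$ is already larger, and conversely. So the claimed estimate is the worst of the two independent sources.

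I expect the main obstacle to be making precise the bookkeeping that yields $\max$ rather than a sum — i.e., showing that the $\lfloor \log_p n\rfloor$ loss from evaluating power series / tiny integrals and the $m$-digit loss from inverting $B$ do not compound. The cleanest way is probably to fix a target of $n$ output digits, work backwards to see how much precision $N$ is needed in Algorithm~\ref{kedlaya} (using the erratum bound $N - 1 - \lfloor \log_p \max\{N, 2g+1\}\rfloor$), observe that the adjugate step is lossless, and note that the division by $\det B$ is the unique operation reducing \emph{absolute} precision of the final vector; one then checks that for the stated output accuracy $n - \max\{m, \lfloor\log_p n\rfloor\}$, the required $N$ is finite and the round-trip closes. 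A secondary subtlety worth a sentence is the transpose: whether one solves with $\Frob$ or $\Frob^t$, $\det$ and integrality are unchanged, so $m = v_p(\det(\Frob^t - I)) = v_p(\det(\Frob - I))$ and the estimate is insensitive to the convention; likewise the hypothesis that $\omega$ is of the second kind is what lets us ignore the branch of $\mathrm{Log}$ and treat all the integrals appearing as well-defined with the stated precision behavior.
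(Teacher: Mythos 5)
Your proposal follows essentially the same route as the paper's proof: express the integral via the fundamental linear system, attribute a loss of $m$ digits to the division by $\det(B)$ in the matrix inversion and a loss of $\lfloor\log_p n\rfloor$ digits to the tiny-integral/series truncation, and conclude the final precision is $n$ minus the larger of the two. In fact the paper's proof is even terser than yours---it simply asserts the $\max$ without addressing whether the two losses compound, which you correctly flag as the subtle point.
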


\begin{proof}By the linear system (\ref{linear}), 
the Coleman integral is expressed in terms of tiny integrals, 
integrals of exact forms evaluated at points, and a matrix inversion. 
Suppose that the entries of $B = \Frob^t - I$ are computed to precision $n$. 
Then taking $B\inv$, we have to divide by $\det(B)$, which lowers the precision by 
$m = v_p(\det(B))$. By Proposition~\ref{prop:tiny precision}, 
computing tiny integrals (with the series expansions truncated modulo $t^{n-1}$)
gives a result 
precise up to $n- \lfloor \log_p n\rfloor$ digits.
Thus the value of the integral $\int_P^Q \omega$ will be correct to $n-\max\{m , \lfloor \log_p n \rfloor\}$ digits of precision.\end{proof}

\subsection{Complexity analysis}

We assume that asymptotically fast integer and polynomial multiplication algorithms are used; specifically addition, subtraction, multiplication, and division take $\softO(\log N)$ bit operations in $\Z/N\Z$ and $\softO(n)$ basering operations in $R[x]/x^n\!R[x]$. 
In particular, this allows arithmetic operations in $\Qp$ to $n$ (relative) digits of precision, hereafter called field operations, in time $\softO(n \log p)$. 
Using Newton iteration, both square roots and the Teichm\"{u}ller character can be computed to $n$ digits of precision using $\softO(\log n)$ arithmetic operations. 
(We again consider only points in non-Weierstrass discs defined over unramified fields.)

\begin{proposition}\label{complexity}
Let $\int_P^Q \omega$ be a Coleman integral on a curve of genus $g$ over $\Q_p$, with $\omega=df_\omega+\sum_{i=1}^{2g-i}c_i\omega_i$ a differential of the second kind and with $P,Q$
in non-Weierstrass residue discs, defined over $\Q_p$, and accurate to $n$ digits of precision. Let $\Frob$ be the matrix of the action of Frobenius on the basis differentials, and let $m = v_p(\det(\Frob^t-I))$.  Let $F(n)$ be the running time of evaluating $f_\omega$ at $P$ and $Q$ to $n$ digits of precision. 
The value of the integral $\int_P^Q \omega$ can be computed to $n-\max\{m , \lfloor \log_p n \rfloor\}$ digits of precision in time $F(n) + \softO(pn^2g^2 + g^3n\log p)$. 
(Over a degree $N$ unramified extension of $\Q_p$, the analysis is the same with the runtime
multiplied by a factor of $N$.)
\end{proposition}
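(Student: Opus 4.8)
The plan is to run Algorithm~\ref{algo:nonteich} on the basis $\{\omega_i\}_{i=0}^{2g-1}$ and track the cost of each step. Parts~(a) and~(d) of Theorem~\ref{thm:coleman} reduce the problem to this: writing $\omega = df_\omega + \sum_i c_i\omega_i$, one has
\[
\int_P^Q \omega = \bigl(f_\omega(Q) - f_\omega(P)\bigr) + \sum_i c_i \int_P^Q \omega_i,
\]
where the first summand costs $F(n)$ by definition of $F$, and forming the final combination is $O(g)$ field operations. The precision claim is then exactly Proposition~\ref{m} (bearing in mind that over $\Qp$ one uses the first power of Frobenius in Algorithm~\ref{algo:nonteich}, which should not be confused with the invariant $m = v_p(\det(\Frob^t - I))$ in the statement). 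So everything comes down to bounding the running time of Algorithm~\ref{algo:nonteich}.

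The dominant cost is Step~1, i.e.\ running Algorithm~\ref{kedlaya}: I would cite the complexity and precision analysis of \cite{kedlaya:mw}, its erratum, and \cite{harvey:matfrob}, which (after the logarithmic precision loss recorded just before Proposition~\ref{m}) produce the Frobenius matrix $\Frob$ and the functions $f_i \in A^\dagger$ to $n$ digits in $\softO(pn^2g^2)$ bit operations, with each $f_i$ delivered as a sum of $\softO(pn)$ terms $B_j(x)/y^j$, $\deg B_j \le 2g$. The remaining work fits within the same bound: (i)~the points $\phi(P), \phi(Q)$ (which lie in the residue discs of $P, Q$) are computed by Newton iteration in $\softO(\log n)$ field operations each; (ii)~by Proposition~\ref{prop:tiny precision} the local expansions need only be carried to order $\softO(n)$, and the $2g$ tiny integrals $\int_P^{\phi(P)}\omega_i$ and the $2g$ tiny integrals $\int_{\phi(Q)}^Q\omega_i$ are obtained from a single square root and a single series inversion in $\softO(gn)$ field operations; (iii)~evaluating the $f_i$ at $P$ and $Q$ costs $\softO(png)$ field operations given the bound on their length. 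Converting at $\softO(n\log p)$ bit operations per field operation, (i)--(iii) are $\softO(pn^2g^2)$ up to logarithmic factors. Finally, Step~2 assembles the fundamental linear system \eqref{linear}, whose matrix is $\Frob^t - I$, and solves it: this is one inversion of a $2g \times 2g$ matrix over $\Qp$ to $n$ digits, namely $\softO(g^3)$ field operations, or $\softO(g^3 n\log p)$ bit operations. Summing gives $F(n) + \softO(pn^2g^2 + g^3n\log p)$, with precision $n - \max\{m, \lfloor \log_p n \rfloor\}$ by Proposition~\ref{m}.

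Over a degree-$N$ unramified extension of $\Qp$, each field operation is realized by $\softO(N)$ operations over $\Qp$ (to the same $n$ digits), and the only structural change is that Remark~\ref{remark:frobenius power} must be used to build the $N$-th Frobenius power, whose extra cost is subsumed; this multiplies the whole estimate by $N$.

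I expect the only substantive ingredient to be the $\softO(pn^2g^2)$ bound for Algorithm~\ref{kedlaya} together with the $\softO(pn)$ bound on the length of each $f_i$ --- that is, precisely the quantitative output of \cite{kedlaya:mw} (with its erratum) and \cite{harvey:matfrob}, including the correct accounting of $\log p$ factors. Everything else is bookkeeping: truncating power series as licensed by Proposition~\ref{prop:tiny precision}, one $2g \times 2g$ matrix inversion, and the precision estimate already packaged in Proposition~\ref{m}.
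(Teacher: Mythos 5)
Your proof is correct and follows essentially the same route as the paper's: the dominant $\softO(pn^2g^2)$ term comes from Algorithm~\ref{kedlaya}, the $\softO(g^3 n\log p)$ term from the $2g\times 2g$ matrix inversion in the fundamental linear system, the $\softO(png^2)$ field operations for recording and evaluating the $f_i$ are proportional to the reduction phase, and the tiny integrals and Frobenius images $\phi(P),\phi(Q)$ are subsumed, with precision handled by Proposition~\ref{m}. The only cosmetic difference is that the paper bounds the cost of computing $\phi(P),\phi(Q)$ as $\softO(g+\log p)$ field operations via the explicit formula in Algorithm~\ref{kedlaya} rather than your $\softO(\log n)$ via Newton iteration (one also needs the $O(\log p)$ exponentiation $x\mapsto x^p$), but either way this step is dominated.
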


\begin{proof}
An essential input to the algorithm is the matrix of the action of Frobenius, which can be computed by Kedlaya's algorithm to $n$ digits of precision in running time $\softO(pn^2g^2)$. 
Inverting the resulting matrix can be (na\"ively) done with $O(g^3)$ arithmetic operations in $\Qp$. 
% asym fast MM -> precision loss?
It remains to be shown that no other step exceeds these running times. For the tiny integral on the first basis differential, the power series $x(t)/y(t) = x(t)f(x(t))^{-1/2}$ can be computed modulo $t^{n-1}$ using Newton iteration, requiring $\softO(n \log n)$ field operations. Each other basis differential can be computed from the first by multiplication by the linear polynomial $x(t)$ and the definite integral evaluated with $\softO(n)$ field operations, for a total of $\softO(gn^2)$ bit operations. 
Computing $\phi(P)$ and $\phi(Q)$ to $n$ digits of precision is cheap; directly using the formula in Algorithm~\ref{kedlaya} uses $\softO(g + \log p)$ field operations. 
The last potentially significant step is computing and evaluating the $f_i$ at each $P$ and/or $Q$. The coefficients of the $f_i$ can be read off in the reduction phase of Kedlaya's algorithm, and have $O(png)$ terms each. Evaluating (or even recording) all $g$ of these forms takes $\softO(png^2)$ field operations, or $\softO(pn^2g^2)$ bit operations, which is proportional to the cost of doing the reduction. 
\end{proof}

\subsection{Numerical examples}

Here are some sample computations made using our Sage implementation.
Additional examples will appear in the first author's upcoming PhD thesis.

\begin{example} 
Lepr\'evost \cite{leprevost} showed that the divisor $(1, -1) - \infty^+$ on the genus 2 curve
$y^2 = (2x-1)(2x^5 - x^4 - 4x^2 + 8x - 4)$ over $\Q$ is torsion of order 29. 
Consequently, the integrals of holomorphic differentials against this divisor must vanish.
We may observe this vanishing numerically, as follows. 
Let
$$C: y^2 = x^5 + \frac{33}{16}x^4 + \frac{3}{4}x^3 +\frac{3}{8}x^2 -\frac{1}{4}x +\frac{1}{16}$$ 
be the pullback of Leprevost's curve by the linear fractional transformation
$x \mapsto (1-2x)/(2x)$ taking $\infty$ to $1/2$.
The original points $(1,-1), \infty^+$ correspond to the points
$P = (-1,1)$,  $Q = (0,\frac{1}{4})$ on $C$. The curve $C$ has good reduction at $p=11$,
and we compute 
\[
\int_P^Q \omega_0 = 
 \int_P^Q \omega_1 = O(11^6),
 \int_P^Q \omega_2 = 7 \cdot 11 + 6 \cdot 11^{2} + 3 \cdot 11^{3} + 11^{4} + 5 \cdot 11^{5} + O(11^{6}),
\]
consistent with the fact that $Q-P$ is torsion and $\omega_0, \omega_1$ are holomorphic
but $\omega_2$ is not.
\end{example}

\begin{example} \label{chabauty}
We give an example arising from the Chabauty method, taken from \cite[\S~8.1]{mccallum-poonen}. 
Let $X$ be the curve $$y^2 = x(x-1)(x-2)(x-5)(x-6),$$
whose Jacobian has Mordell-Weil rank 1.
The curve $X$ has good
reduction at $7$, and
\[
X(\F_7) = \{(0,0), (1,0), (2,0), (5,0),
(6,0), (3,6), (3,-6), \infty\}.
\]
By
\cite[Theorem~5.3(2)]{mccallum-poonen}, we know $|X(\Q)| \leq 10$.
However, we can find 10 rational points on $X$: the
six rational Weierstrass points, and the points $(3, \pm 6),
(10, \pm 120)$. Hence $|X(\Q)| = 10$.

Since the Chabauty condition holds, there must exist a holomorphic
differential $\omega$ for which $\int_\infty^Q \omega = 0$ for all $Q \in X(\Q)$.
We can find such a differential by taking $Q$ to be one of the rational
non-Weierstrass points, then computing
$a := \int_{\infty}^{Q} \omega_0, b := \int_{\infty}^Q \omega_1$ and setting
$\omega = b\omega_0 - a\omega_1.$ For $Q = (3,6)$, we obtain
\begin{align*}a &= 6 \cdot 7 + 6 \cdot 7^{2} + 3 \cdot 7^{3} + 3 \cdot 7^{4} + 2 \cdot 7^{5} + O(7^{6}) \\
b &= 4 \cdot 7 + 2 \cdot 7^{2} + 6 \cdot 7^{3} + 4 \cdot 7^{5} + O(7^{6}).
\end{align*}
We then verify that $\int_Q^R \omega$ vanishes for each of the other rational points $R$.
\end{example}

\begin{remark}
It is worth pointing out some facts not exposed by Example~\ref{chabauty}. For instance,
since $\omega$ is already determined by a single rational non-Weierstrass point, we could
have used it instead of a brute-force seach to find other rational points.
More seriously, in other examples, the integral
$\omega$ may vanish at a point defined over a number field which has a rational multiple
in the Jacobian. Such points may be difficult to find by brute-force search; 
it may be easier to reconstruct them from $p$-adic approximations,
obtained by writing $\int_\infty^* \omega$
as a function of a linear parameter of a residue disc, then finding the zeroes
of that function.
\end{remark}

\section{Future directions}\label{sec6}

Here are some potential extensions of our computation of Coleman integrals.

\subsection{Iterated integrals}

Coleman's theory of integration is not limited to single integrals; it gives rise to 
an entire class of locally analytic functions, the \emph{Coleman functions},
on which antidifferentiation is well-defined. In other words, one can define integrals
\[
\int_P^Q \omega_n \cdots \omega_1
\]
which behave formally like iterated path integrals
\[
\int_0^1 \int_0^{t_1} \cdots \int_0^{t_{n-1}} f_n(t_n) \cdots f_1(t_1) \,dt_n \cdots\, dt_1.
\]
These appear in several applications of Coleman integration,
e.g., $p$-adic regulators in $K$-theory, and the nonabelian Chabauty method.

As in the case of a single integral,
one can use Frobenius equivariance to compute iterated Coleman integrals on hyperelliptic curves.
One obtains a linear system expressing all $n$-fold integrals of basis differentials in terms of lower order integrals. Note that the number of such $n$-fold integrals is
$(2g)^n$, so this is only feasible for small $n$. The cases $n \leq 4$ are already useful for
applications, but ideas for reducing the combinatorial explosion for larger
$n$ would also be of interest. (One must be slightly careful in dealing with
Weierstrass residue discs; see Remark~\ref{rem:iterated}.)

We have made some limited experiments with double Coleman integrals in Sage. The Fubini identity
\[
\int_P^Q \omega_2 \omega_1 + \int_P^Q \omega_1 \omega_2 = 
\left( \int_P^Q \omega_1 \right) \left( \int_P^Q \omega_2 \right)
\]
turns out to be a useful consistency check for both single and double integrals.

\subsection{Beyond hyperelliptic curves}

It should be possible to convert other algorithms for computing Frobenius actions on de Rham
cohomology, for various classes of curves, into algorithms for computing Coleman integrals
on such curves. Candidate algorithms include the adaptation of Kedlaya's algorithm to
superelliptic curves by Gaudry and G\"urel \cite{gaudry-gurel}, or the general algorithm
for nondegenerate curves due to Castryck, Denef, and Vercauteren \cite{castryck-denef-vercauteren}.
It should also be possible to compute Coleman integrals using Frobenius structures on Picard-Fuchs (Gauss-Manin) connections,
extending Lauder's \emph{deformation method} for computing Frobenius matrices
\cite{lauder-deformation}.

\subsection{Heights after Harvey}

We noted earlier that our algorithms for Coleman integration over $\Q_p$
have linear runtime dependence on the prime $p$, arising from the corresponding dependence
in Kedlaya's algorithm. In \cite{harvey:matfrob}, Harvey gives a variant of Kedlaya's
algorithm with only square-root dependence on $p$ (but somewhat worse dependence on other 
parameters), by reorganizing the computation so that the dominant step is finding the $p$-th
term of a linear matrix recurrence whose coefficients are polynomials in the sequence index. Harvey demonstrates
the practicality of his algorithm for primes greater than $2^{50}$, which may have some
relevance in cryptography for finding curves of low genus with nearly prime Jacobian orders.

It should be possible to use similar ideas to obtain
square-root dependence on $p$ for Coleman integration, 
by constructing a recurrence that computes not just the entries
of the Frobenius matrix but also the values $f_i(P)$ and $f_i(Q)$. 
However, this is presently a purely theoretical question, as 
we do not know of any applications of Coleman integration for very large $p$.

\bibliography{biblio}
\end{document}